\documentclass[11pt, reqno]{amsart}
\usepackage[utf8]{inputenc}
\usepackage{amssymb, amsmath, amsthm, mathrsfs, setspace, enumerate}
\usepackage[colorlinks=true,linkcolor=purple, citecolor=blue,urlcolor=magenta]{hyperref}
\usepackage{amsmath, amssymb}
\usepackage{tikz}
\usepackage{pgfplots}
\usepackage{xcolor}
\usepackage{graphicx}
\usepackage{float}  
\newcommand{\coloneqq}{\mathrel{:=}}

\newtheorem{theo}{Theorem}[section]
\newtheorem{lem}{Lemma}[section]

\numberwithin{equation}{section}
\title[Sharp Hankel and Hermitian--Toeplitz Determinants...]{Sharp Hankel and Hermitian--Toeplitz Determinants Involving Logarithmic and Inverse Logarithmic Coefficients for the class \(\mathcal{S}_{car}^*\)}
\author[N. Sarkar and P. Das]{Nabadwip Sarkar and Pradip Das}
\address{Amity School of Applied Sciences, Amity University Mumbai, Panvel, Navi Mumbai, Maharashtra-410206, India}
\email{nsarkar@mum.amity.edu, naba.iitbmath@gmail.com}
\address{Department of Mathematics, Raiganj University, Raiganj, West Bengal-733134, India.}
\email{pradipsmath@gmail.com}

\begin{document}

\maketitle
\renewcommand{\thefootnote}{}
\footnote{2020 \emph{Mathematics Subject Classification}:30C45, 30C50}
\footnote{\emph{Key words and phrases}: Univalent functions, cardiod domain, Inverse Logarithmic coefficients, Hankel determinant, Hermitian--Toeplitz Determinants}
\footnote{*\emph{Corresponding Author}: Pradip Das.}

\renewcommand{\thefootnote}{\arabic{footnote}}
\setcounter{footnote}{0}
\begin{abstract}
This paper focuses on coefficient problems for the class of starlike functions related to a cardioid region. We derive sharp bounds for the Hankel determinants of logarithmic coefficients and logarithmic inverse coefficients of order two. Additionally, we completely solve the extremal problem for the third-order Hermitian--Toeplitz determinant by providing its sharp upper and lower bounds. All results are supported by extremal examples demonstrating the sharpness of the obtained estimates.
\end{abstract}
\section{Introduction}

Let \(\mathcal{H}\) represent the class of analytic functions defined in the open unit disk \(\mathbb{D}\coloneqq \{z\in \mathbb{C}:|z|<1\}\). The space \(\mathcal{H}\) is a locally convex topological vector space equipped with the topology of uniform convergence on compact subsets of \(\mathbb{D}\). Define \(\mathcal{A}\) as the subclass of functions \(f\in \mathcal{H}\) satisfying \(f(0)=0\) and \(f'(0)=1\). Further, let \(\mathcal{S}\) denote the subset of \(\mathcal{A}\) consisting of functions that are univalent (one-to-one) in \(\mathbb{D}\). For any \(f\in \mathcal{A}\), it admits the series representation
\begin{equation}
f(z) = z + \sum_{n=2}^{\infty} a_n z^n, \quad z\in \mathbb{D}. \label{eq:series}
\end{equation}

For \(q,n\in \mathbb{N}\), the Hankel determinant \(H_{q,n}(f)\) associated with the Taylor coefficients of a function \(f\in \mathcal{A}\), as given in \eqref{eq:series}, is defined as
\[
H_{q,n}(f) = 
\begin{vmatrix}
a_n & a_{n+1} & \dots & a_{n+q-1} \\
a_{n+1} & a_{n+2} & \dots & a_{n+q} \\
\vdots & \vdots & \ddots & \vdots \\
a_{n+q-1} & a_{n+q} & \dots & a_{n+2q-2}
\end{vmatrix}.
\]

Hankel determinants of various orders have been extensively studied in different contexts (see, for example, \cite{ref4}). A notable case is the second Hankel determinant, \(H_{2,1}(f)\), which is also known as the Fekete--Szeg\H{o} functional. Fekete and Szeg\H{o} derived bounds for \(|a_3 - \mu a_2^2|\) where \(\mu\) is a real parameter (see \cite[Theorem 3.8]{ref11}). For a function \(f\in \mathcal{S}\), let \(g\) denote its inverse, defined in a neighborhood of the origin with the Taylor series expansion
\begin{equation}
g(w) = f^{-1}(w) = w + \sum_{n=2}^{\infty} A_n w^n, \label{eq:invseries}
\end{equation}
where the radius of convergence can be chosen as \(|w|<1/4\) by Koebe's \(1/4\)-theorem. Using variational methods, L\"owner \cite{ref28} established the sharp bound
\[
|A_n| \leq K_n, \quad \text{for all } n\in \mathbb{N},
\]
where \(\displaystyle K_n = \frac{(2n)!}{n!(n+1)!}\) and \(K(w)=w+K_2w^2+K_3w^3+\dots\) represents the inverse of the Koebe function. If \(f(z)=z+\sum_{n=2}^{\infty}a_n z^n\) is a member of \(\mathcal{S}\), then the relation \(f(f^{-1})(w)=w\) implies, from the series expansion \eqref{eq:invseries}, the following coefficients:
\begin{equation}
\begin{cases}
A_2 &= -a_2, \\
A_3 &= -a_3 + 2a_2^2, \\
A_4 &= -a_4 + 5a_2a_3 - 5a_2^3.
\end{cases} \label{eq:invcoeff}
\end{equation}

The logarithmic coefficients \(\gamma_n\) of a function \(f\in \mathcal{S}\) are defined by
\begin{equation}
F_f(z) \coloneqq \log \frac{f(z)}{z} = 2\sum_{n=1}^{\infty} \gamma_n z^n, \quad z\in \mathbb{D}. \label{eq:logcoeff}
\end{equation}

Few exact upper bounds for \(\gamma_n\) are known. Milin \cite{ref29} highlighted the importance of studying these bounds in connection with the Bieberbach conjecture. Milin conjectured that for \(f\in \mathcal{S}\) and \(n\geq 2\)
\[
\sum_{m=1}^{n} \sum_{k=1}^{m} \left(k|\gamma_k|^2 - \frac{1}{k}\right) \leq 0.
\]
This conjecture, proven by De Branges, ultimately led to the resolution of the Bieberbach conjecture \cite{ref5}. For the Koebe function \(k(z)=z/(1-z)^2\), the logarithmic coefficients are given by \(\gamma_n=1/n\). Since the Koebe function is the extremal function for many extremal problems in \(\mathcal{S}\), it is natural to conjecture that \(|\gamma_n|\leq 1/n\) for all \(f\in \mathcal{S}\). However, this conjecture does not hold in general, even asymptotically. For example, there exists a bounded function \(f\in \mathcal{S}\) with logarithmic coefficients \(\gamma_n \not\sim O(n^{-0.83})\) (see \cite[Theorem 8.4]{ref11}). By differentiating the logarithmic series \eqref{eq:logcoeff} and comparing coefficients, the following expressions for \(\gamma_n\) in terms of \(a_n\) are obtained:
\begin{equation}
\begin{cases}
\gamma_1 &= \frac{1}{2} a_2,\\[2mm]
\gamma_2 &= \frac{1}{2}\left(a_3 - \frac{1}{2} a_2^2\right),\\[2mm]
\gamma_3 &= \frac{1}{2}\left(a_4 - a_2a_3 + \frac{1}{3} a_2^3\right).
\end{cases} \label{eq:gamma}
\end{equation}
If \(f\in \mathcal{S}\), it follows that \(|\gamma_1|\leq 1\), since \(|a_2|\leq 2\). Using the Fekete--Szeg\H{o} inequality \cite[Theorem 3.8]{ref11} for functions in \(\mathcal{S}\) and substituting into \eqref{eq:logcoeff}, the sharp estimate for \(\gamma_2\) is given by
\[
|\gamma_2| \leq \frac{1}{2}(1+2e^{-2}) \approx 0.635.
\]
For \(n\geq 3\), deriving bounds for \(|\gamma_n|\) becomes considerably more challenging, and no significant general bounds for \(|\gamma_n|\) for functions in \(\mathcal{S}\) are currently known. In 2017, Ali and Allu \cite{ref1} provided initial bounds on logarithmic coefficients for close-to-convex functions. For recent advances on several subclasses of close-to-convex functions, see \cite{ref2, ref6, ref32}. The concept of logarithmic inverse coefficients, introduced by Ponnusamy et al. \cite{ref31}, concerns the logarithmic coefficients of the inverse function \(f^{-1}\). These coefficients, denoted by \(\Gamma_n\), are defined through the relation
\[
F_{f^{-1}}(w) \coloneqq \log \frac{f^{-1}(w)}{w} = 2\sum_{n=1}^{\infty} \Gamma_n w^n, \quad |w|<\frac{1}{4},
\]
where
\begin{equation}
\begin{cases}
\Gamma_1 &= -\frac{1}{2} a_2,\\[2mm]
\Gamma_2 &= -\frac{1}{2} a_3 + \frac{3}{4} a_2^2,\\[2mm]
\Gamma_3 &= -\frac{1}{2}\left(a_4 - 4a_2a_3 + \frac{10}{3} a_2^3\right).
\end{cases} \label{eq:invlogcoeff}
\end{equation}
Ponnusamy et al. \cite{ref31} derived sharp upper bounds for these coefficients in the class \(\mathcal{S}\) expressed as
\[
|\Gamma_n| \leq \frac{1}{2n} \binom{2n}{n}, \quad n\in \mathbb{N},
\]
with equality attained only for the Koebe function and its rotations. They also provided sharp bounds for the initial logarithmic inverse coefficients for several important geometric subclasses of \(\mathcal{S}\). Kowalczyk and Lecko \cite{ref16} recently proposed the study of the Hankel determinant with entries as logarithmic coefficients of \(f\in \mathcal{S}\), defined as
\[
H_{q,n}(F_f/2) = 
\begin{vmatrix}
\gamma_n & \gamma_{n+1} & \dots & \gamma_{n+q-1} \\
\gamma_{n+1} & \gamma_{n+2} & \dots & \gamma_{n+q} \\
\vdots & \vdots & \ddots & \vdots \\
\gamma_{n+q-1} & \gamma_{n+q} & \dots & \gamma_{n+2q-2}
\end{vmatrix}.
\]
Also the concept of the Hankel determinant \(H_{q,n}(F_{f^{-1}}/2)\) \cite{ref3} where the elements of the determinants are logarithmic coefficients of the inverse functions \(f\in \mathcal{S}\) are expressed as
\[
H_{q,n}(F_{f^{-1}}/2) = 
\begin{vmatrix}
\Gamma_n & \Gamma_{n+1} & \dots & \Gamma_{n+q-1} \\
\Gamma_{n+1} & \Gamma_{n+2} & \dots & \Gamma_{n+q} \\
\vdots & \vdots & \ddots & \vdots \\
\Gamma_{n+q-1} & \Gamma_{n+q} & \dots & \Gamma_{n+2q-2}
\end{vmatrix}.
\]

In recent years, considerable attention has been devoted to the study of Hankel determinants involving logarithmic coefficients for various subclasses of analytic functions such as starlike, convex, univalent, strongly starlike, and strongly convex functions (see \cite{ref18, ref19, ref20, ref34} and the references therein).

In a similar direction, it is worthwhile to study the problem of estimating the Hermitian--Toeplitz determinant. For \(q,n\in \mathbb{N}\), the Hermitian--Toeplitz determinant (see \cite{ref13, ref21}) of order \(n\) associated with the sequence \(\{a_k\}_{k\geq 1}\) of coefficients of a function \(f\in \mathcal{A}\) is defined as
\begin{equation}
T_{q,n}(f) \coloneqq 
\begin{vmatrix}
a_n & a_{n+1} & \dots & a_{n+q-1} \\
\overline{a_{n+1}} & a_n & \dots & a_{n+q-2} \\
\vdots & \vdots & \ddots & \vdots \\
\overline{a_{n+q-1}} & \overline{a_{n+q-2}} & \dots & a_n
\end{vmatrix}. \label{eq:HT}
\end{equation}
A direct computation from \eqref{eq:HT} leads to the following third-order Hermitian--Toeplitz determinants:
\begin{equation}
T_{3,1}(f) = 2\Re\left(a_2^2 \overline{a_3}\right) - 2|a_2|^2 - |a_3|^2 + 1. \label{eq:HT31}
\end{equation}

The investigation of Hermitian--Toeplitz determinants for various subclasses of normalized analytic functions was initially carried out in \cite{ref10, ref15} and later extended in \cite{ref9}. More recent contributions in this direction were obtained in \cite{ref25, ref17}. In particular, Cudna et al. \cite{ref10} established the sharp bounds for the second- and third-order Hermitian--Toeplitz determinants within the classes of starlike and convex functions of order \(\beta\). Subsequently, Kumar et al. \cite{ref25} determined sharp estimates for the same determinants in the setting of Janowski starlike and convex functions, thereby generalizing the results of \cite{ref10}. Additional developments and related results can be found in \cite{ref8, ref22, ref23, ref24, ref35, ref36, ref37}.

We write \(\mathcal{S}\) for the subclass of \(\mathcal{A}\) consisting of functions that are univalent in \(\mathbb{D}\). A function \(f\) is said to be subordinate to a function \(g\), denoted by \(f\prec g\), if there exists an analytic function \(w\) with \(|w(z)|\leq |z|\) and \(w(0)=0\) such that \(f(z)=g(w(z))\). If \(g\) is univalent and \(f(0)=g(0)\), then \(f(\mathbb{D})\subseteq g(\mathbb{D})\).

Consider the function \(\phi: \mathbb{D}\to \mathbb{C}\) defined by
\[
\phi(z) = 1 + \frac{4}{3}z + \frac{2}{3}z^2.
\]
The function \(\phi\) maps \(\mathbb{D}\) onto the region bounded by the cardioid
\[
\Delta_C = \left\{x+iy: (9x^2+9y^2-18x+5)^2 - 16(9x^2+9y^2-6x+1) = 0\right\}.
\]
Using the function \(\phi\), a subclass of starlike functions related to a cardioid domain was introduced by Sharma et al. \cite{ref33}. It is defined as
\[
\mathcal{S}_{car}^* \coloneqq \left\{ f\in \mathcal{A}: \frac{z f'(z)}{f(z)} \prec 1 + \frac{4}{3}z + \frac{2}{3}z^2 \right\}.
\]

In this paper, we divide our study into two sections. In the first section, we explore the concept of the second Hankel determinant involving logarithmic coefficients and logarithmic inverse coefficients for the class \(\mathcal{S}_{car}^*\). In the second section, we investigate the third Hermitian--Toeplitz determinant for the class \(\mathcal{S}_{car}^*\).

\section{Lemmas}

In this section, we present the essential lemmas that serve as the foundation for establishing the main results of this paper. Let \(\mathcal{P}\) denote the family of all analytic functions \(p\) having positive real part in \(\mathbb{D}\), which can be written as
\begin{equation}
p(z) = 1 + c_1 z + c_2 z^2 + c_3 z^3 + \cdots. \label{eq:car}
\end{equation}
Functions belonging to \(\mathcal{P}\) are commonly referred to as Carath\'{e}odory functions. It is a classical result that \(|c_n|\leq 2\) for \(n\geq 1\) whenever \(p\in \mathcal{P}\).

The parametric description of the coefficients often plays a vital role. In Lemma \ref{lem:coeff2}, equation \eqref{eq:c3} originates from the work of Carath\'{e}odory \cite{ref11}. Equation \eqref{eq:c1c2} was established in \cite{ref30}. Later, in 1982, Libera and Zlotkiewicz \cite{ref26, ref27} derived equation \eqref{eq:c3} under the restriction \(c_1\geq 0\). Subsequently, Cho et al. \cite{ref7} extended equation \eqref{eq:c3} to the general case and also provided an explicit extremal function.

\begin{lem}[{\cite[Lemma 3, p. 254]{ref27}}]\label{lem:coeff1}
Let \(\mathcal{P}\) denote the class of analytic functions with the Taylor expansion
\begin{equation}
p(z) = 1 + c_1 z + c_2 z^2 + c_3 z^3 + \cdots, \label{eq:pexp}
\end{equation}
satisfying \(\Re\{p(z)\}>0\) for \(z\in \mathbb{D}\). Then
\[
2c_2 = c_1^2 + (4 - c_1^2)\xi,
\]
for some \(\xi \in \overline{\mathbb{D}}\).
\end{lem}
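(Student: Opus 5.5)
The plan is to first prove the sharper estimate
\[
\Bigl|c_2-\tfrac12 c_1^2\Bigr|\le 2-\tfrac12|c_1|^2 ,
\]
and then to read off \(\xi\) directly. The only extra ingredient is the elementary inequality \(|4-c_1^2|\ge 4-|c_1|^2\), so the factor \(4-c_1^2\) in the statement causes no difficulty for complex \(c_1\).

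\textbf{Step 1 (reduction to a Schwarz function).} Since \(p\in\mathcal P\) and \(p(0)=1\), the function \(w=(p-1)/(p+1)\) is analytic in \(\mathbb D\) with \(|w|<1\) and \(w(0)=0\). Write \(w(z)=b_1z+b_2z^2+\cdots\). From \(p=(1+w)/(1-w)=1+2w+2w^2+\cdots\), comparing coefficients gives
\[
c_1=2b_1,\qquad c_2=2b_2+2b_1^2 .
\]
Hence \(c_2-\tfrac12c_1^2=2b_2\).

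\textbf{Step 2 (coefficient bound for \(w\)).} Let \(\omega(z)=w(z)/z\). By the Schwarz lemma \(\omega\) maps \(\mathbb D\) into \(\overline{\mathbb D}\), with \(\omega(0)=b_1\) and \(\omega'(0)=b_2\).
\begin{itemize}
\item If \(|b_1|=1\), then \(\omega\) is constant by the maximum principle, so \(b_2=0\).
\item Otherwise the Schwarz--Pick inequality gives \(|\omega'(0)|\le 1-|\omega(0)|^2\), i.e. \(|b_2|\le 1-|b_1|^2\).
\end{itemize}
In both cases \(|c_2-\tfrac12c_1^2|=2|b_2|\le 2-2|b_1|^2=2-\tfrac12|c_1|^2\). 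Multiplying by \(2\) yields
\[
|2c_2-c_1^2|\le 4-|c_1|^2 .
\]

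\textbf{Step 3 (choice of \(\xi\)).} We have \(|c_1|\le2\) and \(|4-c_1^2|\ge 4-|c_1^2|=4-|c_1|^2\ge0\).
\begin{itemize}
\item If \(c_1^2\ne4\), set \(\xi=(2c_2-c_1^2)/(4-c_1^2)\). Then \(|\xi|\le (4-|c_1|^2)/|4-c_1^2|\le1\), so \(\xi\in\overline{\mathbb D}\), and the identity \(2c_2=c_1^2+(4-c_1^2)\xi\) holds by construction.
\item If \(c_1^2=4\), then \(|c_1|=2\), and Step 2 forces \(2c_2=c_1^2\). The identity then holds for every \(\xi\in\overline{\mathbb D}\), for instance \(\xi=0\).
\end{itemize}

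The only point needing care is Step 2, namely the degenerate case \(|b_1|=1\). The maximum principle handles it, so no real obstacle is expected.
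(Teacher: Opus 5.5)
Your proof is correct, but it takes a different route from the one the paper relies on. The paper gives no argument for this lemma. It imports it from Libera and Zlotkiewicz, where, after rotating so that $c_1\ge 0$, the relation is read off from the Carath\'eodory--Toeplitz criterion. There, nonnegativity of the Toeplitz determinant built from $2,c_1,c_2$ is, for real $c_1$, exactly the inequality $|2c_2-c_1^2|\le 4-c_1^2$.

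You instead pass to the Schwarz function $w=(p-1)/(p+1)$ and apply Schwarz--Pick to $w(z)/z$. This yields the rotation-invariant bound $|2c_2-c_1^2|\le 4-|c_1|^2$ with no normalization. The reverse triangle inequality $|4-c_1^2|\ge 4-|c_1|^2$ then gives the statement exactly as written, which, unlike the cited source, does not assume $c_1\ge 0$.

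What your approach buys:
\begin{enumerate}
\item[(i)] It is elementary and self-contained.
\item[(ii)] It is the first step of the Schur algorithm. For $c_1\ge0$ your $\xi=b_2/(1-b_1^2)$ is the second Schur parameter of $w(z)/z$, i.e.\ the $\tau_2$ of Lemma~\ref{lem:coeff2}, and one more step produces the $c_3$ formula there.
\end{enumerate}

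What the determinant route buys: it is part of a complete characterization of Carath\'eodory coefficient sequences. So for $c_1\ge0$ it also shows that every $\xi\in\overline{\mathbb{D}}$ actually occurs. That converse is not claimed in the lemma, so nothing is missing from your proof. Your sharper bound in fact shows the converse fails for non-real $c_1$, where $|\xi|\le(4-|c_1|^2)/|4-c_1^2|<1$.
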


\begin{lem}[\cite{ref26, ref27}]\label{lem:coeff2}
If \(p\in \mathcal{P}\) is of the form \eqref{eq:pexp} with \(c_1\geq 0\), then
\begin{align}
c_1 &= 2\tau_1, \label{eq:c1}\\
c_2 &= 2\tau_1^2 + 2(1-\tau_1^2)\tau_2, \label{eq:c1c2}
\end{align}
and
\begin{equation}
c_3 = 2\tau_1^3 + 4(1-\tau_1^2)\tau_1\tau_2 - 2(1-\tau_1^2)\tau_1\tau_2^2 + 2(1-\tau_1^2)(1-|\tau_2|^2)\tau_3, \label{eq:c3}
\end{equation}
for some \(\tau_1\in [0,1]\) and \(\tau_2,\tau_3\in \overline{\mathbb{D}}\coloneqq \{z\in \mathbb{C}: |z|\leq 1\}\).
\end{lem}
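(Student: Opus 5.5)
The plan is to go through the Schur algorithm, that is, a one-step reduction of Carathéodory functions to Schur functions and back. First I reduce to the disk. Given \(p\in\mathcal{P}\), I set
\[
\omega(z)=\frac{p(z)-1}{p(z)+1}.
\]
Then \(\omega\) is analytic in \(\mathbb{D}\), \(|\omega|<1\) and \(\omega(0)=0\). So \(\omega(z)=z\,\psi(z)\), where by the Schwarz lemma \(\psi\) maps \(\mathbb{D}\) into \(\overline{\mathbb{D}}\). Write \(\psi(z)=b_0+b_1z+b_2z^2+\cdots\). Comparing coefficients in \(p=(1+\omega)/(1-\omega)=1+2\omega+2\omega^2+2\omega^3+\cdots\) gives
\[
c_1=2b_0,\qquad c_2=2b_1+2b_0^2,\qquad c_3=2b_2+4b_0b_1+2b_0^3 .
\]
I set \(\tau_1=b_0=c_1/2\). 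This lies in \([0,1]\) because \(c_1\ge 0\) and \(|c_1|\le 2\), which gives \eqref{eq:c1}.

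Next I apply the Schur step to \(\psi\) to obtain \(b_1\) and \(b_2\).
\begin{itemize}
\item If \(\tau_1<1\), then \(|b_0|<1\) and the Schur transform
\[
\psi_1(z)=\frac{1}{z}\,\frac{\psi(z)-\tau_1}{1-\tau_1\psi(z)}
\]
is again a self-map of \(\overline{\mathbb{D}}\), with \(\psi_1(0)=\tau_2\) and \(\psi_1'(0)=\beta\).
\item Expanding \(\psi=(\tau_1+z\psi_1)/(1+\tau_1 z\psi_1)\) to second order gives
\[
b_1=(1-\tau_1^2)\tau_2,\qquad b_2=(1-\tau_1^2)\bigl(\beta-\tau_1\tau_2^2\bigr).
\]
\item Substituting \(b_1\) yields \(c_2=2\tau_1^2+2(1-\tau_1^2)\tau_2\), which is \eqref{eq:c1c2}.
\end{itemize}

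The main step is to control \(\beta\). By the Schwarz–Pick inequality at the origin for \(\psi_1\),
\[
|\beta|\le 1-|\tau_2|^2 .
\]
If \(|\tau_2|<1\), I put \(\tau_3=\beta/(1-|\tau_2|^2)\in\overline{\mathbb{D}}\). If \(|\tau_2|=1\), then \(\psi_1\) is constant, so \(\beta=0\) and \(\tau_3\) may be taken arbitrary. Now I substitute:
\[
c_3=2(1-\tau_1^2)\bigl(\beta-\tau_1\tau_2^2\bigr)+4\tau_1(1-\tau_1^2)\tau_2+2\tau_1^3 .
\]
This is exactly \eqref{eq:c3}.

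It remains to handle the boundary case \(\tau_1=1\). Then \(\psi\equiv 1\) by the maximum principle, so \(p=(1+z)/(1-z)\) and \(c_2=c_3=2\). All factors \(1-\tau_1^2\) vanish, so the formulas hold with arbitrary \(\tau_2,\tau_3\).

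I expect the only delicate point to be this bookkeeping of the degenerate cases \(\tau_1=1\) and \(|\tau_2|=1\), together with verifying the second-order expansion for \(b_2\).
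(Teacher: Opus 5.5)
Your proof is correct. The expansion $\psi=\tau_1+(1-\tau_1^2)\tau_2 z+(1-\tau_1^2)(\beta-\tau_1\tau_2^2)z^2+\cdots$ is right, Schwarz--Pick gives $|\beta|\le 1-|\tau_2|^2$ and hence $\tau_3\in\overline{\mathbb{D}}$, and the degenerate cases $\tau_1=1$ and $|\tau_2|=1$ are handled properly.

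The paper does not prove this lemma. It imports it from Libera and Zlotkiewicz, with the general form and extremal functions credited to Cho et al. The classical argument there, as I recall it, works purely on the coefficient side through the Carath\'eodory--Toeplitz theorem: $p\in\mathcal{P}$ forces the Toeplitz determinants $D_2,D_3$ built from $2,c_1,c_2,c_3$ to be nonnegative. Then $D_2\ge 0$ is rewritten as $|2c_2-c_1^2|\le 4-c_1^2$, and $D_3\ge 0$ is rewritten, after a fairly heavy algebraic rearrangement, as a disk condition on $c_3$, which is \eqref{eq:c3}.

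Your route replaces that theorem and the determinant algebra with two steps of the Schur algorithm applied to $\omega(z)/z$. You need only the Schwarz lemma, Schwarz--Pick, and a short power-series computation. The parameters $\tau_1,\tau_2,\tau_3$ also get a clear meaning as the first three Schur parameters. The degenerate cases are exactly where the algorithm terminates and $\psi$ becomes a finite Blaschke product. This gives at once the uniqueness statements and explicit extremal functions listed after the lemma. Running the algorithm backwards also shows that every admissible triple $(\tau_1,\tau_2,\tau_3)$ actually occurs. The determinant approach, by contrast, states the characterization directly in terms of the coefficients, without constructing auxiliary functions.
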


For \(\tau_1\in \mathbb{T}\coloneqq \{z\in \mathbb{C}: |z|=1\}\), there exists a unique function \(p\in \mathcal{P}\) with \(c_1\) as in \eqref{eq:c1}, namely,
\[
p(z) = \frac{1+\tau_1 z}{1-\tau_1 z}, \quad z\in \mathbb{D}.
\]
For \(\tau_1\in \mathbb{D}\) and \(\tau_2\in \mathbb{T}\), there exists a unique function \(p\in \mathcal{P}\) with \(c_1\) and \(c_2\) as in \eqref{eq:c1} and \eqref{eq:c1c2}, namely,
\[
p(z) = \frac{1+(\bar{\tau}_1\tau_2+\tau_1)z+\tau_2 z^2}{1+(\bar{\tau}_1\tau_2-\tau_1)z-\tau_2 z^2}, \quad z\in \mathbb{D}.
\]
For \(\tau_1,\tau_2\in \mathbb{D}\) and \(\tau_3\in \mathbb{T}\), there exists a unique function \(p\in \mathcal{P}\) with \(c_1\), \(c_2\), and \(c_3\) as in \eqref{eq:c1}--\eqref{eq:c3}, namely,
\[
p(z) = \frac{1+(\bar{\tau}_2\tau_3+\bar{\tau}_1\tau_2+\tau_1)z+(\bar{\tau}_1\tau_3+\tau_1\bar{\tau}_2\tau_3+\tau_2)z^2+\tau_3 z^3}{1+(\bar{\tau}_2\tau_3+\bar{\tau}_1\tau_2-\tau_1)z+(\bar{\tau}_1\tau_3-\tau_1\bar{\tau}_2\tau_3-\tau_2)z^2-\tau_3 z^3}, \quad z\in \mathbb{D}.
\]

\begin{lem}[\cite{ref9}]\label{lem:max}
Let \(A\), \(B\), \(C\) be real numbers, and let
\[
Y(A,B,C) \coloneqq \max \{ |A+Bz+Cz^2| + 1 - |z|^2 : z\in \overline{\mathbb{D}} \}.
\]
\begin{enumerate}
\item[(i)] If \(AC\geq 0\), then
\[
Y(A,B,C) = 
\begin{cases}
|A|+|B|+|C|, & \text{if } |B| \geq 2(1-|C|), \\[1.5ex]
1+|A| + \frac{B^2}{4(1-|C|)}, & \text{if } |B| < 2(1-|C|).
\end{cases}
\]
\item[(ii)] If \(AC<0\), then
\[
Y(A,B,C) = 
\begin{cases}
1-|A| + \frac{B^2}{4(1-|C|)}, & \text{if } -4AC\left(\frac{1}{C^2}-1\right) \leq B^2 \text{ and } |B|<2(1-|C|), \\[1.5ex]
1+|A| + \frac{B^2}{4(1+|C|)}, & \text{if } B^2 < \min\left\{4(1+|C|)^2, -4AC\left(\frac{1}{C^2}-1\right)\right\}, \\[1.5ex]
R(A,B,C), & \text{otherwise},
\end{cases}
\]
where
\[
R(A,B,C) = 
\begin{cases}
|A|+|B|-|C|, & \text{if } |C|(|B|+4|A|) \leq |AB|, \\[1.5ex]
-|A|+|B|+|C|, & \text{if } |AB| \leq |C|(|B|-4|A|), \\[1.5ex]
(|C|+|A|)\sqrt{1-\frac{B^2}{4AC}}, & \text{otherwise}.
\end{cases}
\]
\end{enumerate}
\end{lem}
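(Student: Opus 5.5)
The plan is to reduce the two-dimensional maximization in Lemma \ref{lem:max} to a sequence of one-variable problems. Write $z=re^{i\theta}$ with $r\in[0,1]$, and first fix $r$ and maximize over $\theta$. Since $A,B,C$ are real, put $t=\cos\theta\in[-1,1]$ and use $\cos2\theta=2t^2-1$. A direct expansion then gives
\[
|A+Bz+Cz^2|^2=(A-Cr^2)^2+B^2r^2+2Br(A+Cr^2)\,t+4ACr^2\,t^2=:h_r(t).
\]
So for each $r$ the inner problem is to maximize a quadratic polynomial in $t$ over $[-1,1]$. Its concavity is governed by the sign of $AC$, which is exactly the split into (i) and (ii).

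\textbf{Case $AC\ge 0$.} Here $h_r$ is convex in $t$, so its maximum is attained at $t=\pm1$. Since $A$ and $Cr^2$ have the same sign, $|A+Cr^2|=|A|+|C|r^2$, and the maximum value is $(|A|+|C|r^2+|B|r)^2$. It remains to maximize
\[
g(r)=|A|+|B|r+|C|r^2+1-r^2\quad\text{on }[0,1].
\]
If $|C|\ge1$, then $g$ is nondecreasing on $[0,1]$, so the maximum is at $r=1$; in this situation the condition $|B|\ge2(1-|C|)$ holds automatically. If $|C|<1$, then $g$ is concave with vertex at $r_0=|B|/(2(1-|C|))$. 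When $r_0\ge1$, the maximum is $g(1)=|A|+|B|+|C|$. When $r_0<1$, the maximum is $g(r_0)=1+|A|+B^2/(4(1-|C|))$. These are the two formulas in (i).

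\textbf{Case $AC<0$.} Now $h_r$ is concave in $t$, with critical point
\[
t_0(r)=-\frac{B(A+Cr^2)}{4ACr}.
\]
If $|t_0(r)|\le1$, the inner maximum is $h_r(t_0)$. Using the identity $(A-Cr^2)^2-(A+Cr^2)^2=-4ACr^2$, this simplifies to
\[
h_r(t_0)=(|A|+|C|r^2)^2\Bigl(1-\frac{B^2}{4AC}\Bigr)\cdot\frac{(\ldots)}{(\ldots)},
\]
an explicit rational expression in $r^2$. In the special case $r=1$ it reduces to $(|A|+|C|)^2\bigl(1-\tfrac{B^2}{4AC}\bigr)$, which is the source of the third value of $R(A,B,C)$. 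If instead $|t_0(r)|>1$, the inner maximum is again at an endpoint, with value $\bigl||A|-|C|r^2\bigr|+|B|r$.

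This gives an outer function $G(r)=\sqrt{\max_t h_r(t)}+1-r^2$ that is defined piecewise on $[0,1]$. The pieces are separated by the roots of $|t_0(r)|=1$, a quadratic condition in $r$. I will then maximize each piece on its subinterval.
\begin{itemize}
\item On the endpoint pieces, the candidates are the vertices of concave quadratics. These give $1-|A|+B^2/(4(1-|C|))$ and $1+|A|+B^2/(4(1+|C|))$, the latter when $|C|r^2$ stays below $|A|$.
\item At the boundary value $r=1$, the candidates are $|A|+|B|-|C|$, $-|A|+|B|+|C|$, or the interior-$t_0$ value, which together give $R(A,B,C)$.
\item Comparing the candidates produces the inequalities $-4AC(1/C^2-1)\le B^2$, $B^2<4(1+|C|)^2$, and $|C|(|B|\pm4|A|)\lessgtr|AB|$.
\end{itemize}
The last set of inequalities arises as the conditions for $t_0(1)\in[-1,1]$ and for the vertices to lie in $[0,1]$.

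I expect the main obstacle to be the bookkeeping in case (ii). One has to show that on the middle piece, where $t_0(r)\in(-1,1)$, the function $G$ is monotone, so that its maximum is pushed either to $r=1$ or to a breakpoint. One also has to verify that the resulting case conditions are exhaustive and consistent. To do this, I would differentiate the explicit expression of $G$ on the middle piece in the variable $s=r^2$, aiming to show that its sign is constant there. I would then compare the candidate values pairwise to recover exactly the stated thresholds.
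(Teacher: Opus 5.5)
The paper gives no proof of this lemma; it is quoted from the cited reference, so your argument has to stand on its own. Your reduction to $h_r(t)$ and your treatment of case (i) are correct and complete. Case (ii), however, is only a plan, and what you defer is exactly the content of the lemma, so there is a gap there.

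\textbf{Your stated obstacle is trivial.} The unspecified factor in $h_r(t_0)$ is identically $1$. Since $(A+Cr^2)^2-4ACr^2=(A-Cr^2)^2$, one gets $h_r(t_0)=(A-Cr^2)^2\bigl(1-\frac{B^2}{4AC}\bigr)=(|A|+|C|r^2)^2K^2$ with $K:=\sqrt{1-B^2/(4AC)}$, for every $r$. So on the middle piece $G(r)=1+K|A|+(K|C|-1)r^2$ is affine in $r^2$, and no differentiation is needed.

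\textbf{The real gap.} Comparing candidate values pairwise does not prove equality. The value $1+|A|+B^2/(4(1+|C|))$ can equal $Y$ only if $r_1:=|B|/(2(1+|C|))$ lies in the piece where $|t_0(r)|\ge 1$; having $|C|r_1^2<|A|$ is not enough. You must also rule out that $G$ is larger at a breakpoint or on the middle piece. Nothing in the proposal produces the branch conditions of (ii), in particular $-4AC(C^{-2}-1)\le B^2$. Contrary to your last bullet, that condition is neither a condition on $t_0(1)$ nor a ``vertex in $[0,1]$'' condition.

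\textbf{The missing idea.} One threshold governs all of case (ii) and makes $G$ unimodal. Assume $B\neq 0$; if $B=0$ then $t_0\equiv 0$ and everything is immediate. The set $\{r>0:|t_0(r)|\le 1\}$, i.e.\ $|B|\,\bigl||A|-|C|r^2\bigr|\le 4|A||C|r$, is an interval $[r_-,r_+]$ with $r_-<\sqrt{|A|/|C|}<r_+$. Here $r_\mp$ are the positive roots of
\[
\psi_-(r):=|B||C|r^2+4|A||C|r-|A||B|,\qquad \psi_+(r):=|B||C|r^2-4|A||C|r-|A||B|.
\]
Put $Q:=-4AC(C^{-2}-1)=4|A|(1-C^2)/|C|$ and, for $|C|<1$, $r_0:=|B|/(2(1-|C|))$. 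Direct substitution gives
\[
\psi_-(r_1)=\frac{|B||C|\,(B^2-Q)}{4(1+|C|)^2},\qquad \psi_+(r_0)=\frac{|B||C|\,(B^2-Q)}{4(1-|C|)^2},
\]
and also $K|C|\ge 1\iff B^2\ge Q$.

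If $B^2\ge Q$, then $G$ is nondecreasing on $[0,\min\{r_0,1\}]$ and nonincreasing afterwards, with $r_0:=+\infty$ if $|C|\ge 1$. This gives $1-|A|+B^2/(4(1-|C|))$ when $|B|<2(1-|C|)$, and $G(1)$ otherwise.

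If $B^2<Q$, which forces $|C|<1$, then $G$ is nondecreasing up to $\min\{r_1,1\}$ and nonincreasing afterwards. This gives $1+|A|+B^2/(4(1+|C|))$ when $B^2<4(1+|C|)^2$, and $G(1)$ otherwise.

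Finally $G(1)=\max_{|z|=1}|A+Bz+Cz^2|=R(A,B,C)$. The conditions $|C|(|B|+4|A|)\le |AB|$ and $|AB|\le |C|(|B|-4|A|)$ say exactly that $|t_0(1)|\ge 1$ with $|A|>|C|$, respectively $|C|>|A|$. This unimodality is what makes the three branches of (ii) exhaustive and consistent. Without it, your proposal does not establish (ii).
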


\section{Hankel determinants involving logarithmic and inverse logarithmic coefficients}

In this section, we provide sharp bounds on the second-order Hankel determinant for the logarithmic coefficients and logarithmic inverse coefficients for the class \(\mathcal{S}_{car}^*\).

\begin{theo}
Let \(f\in \mathcal{S}_{car}^*\) be given by \eqref{eq:series}. Then
\[
\left|H_{2,1}(F_f/2)\right| \leq \frac{16}{27}.
\]
The inequality is sharp.
\end{theo}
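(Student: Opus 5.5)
We plan to work with the Schwarz function from the subordination and then pass to Carathéodory coefficients, so that Lemma \ref{lem:coeff2} applies. If \(f\in\mathcal{S}_{car}^*\), then \(zf'(z)/f(z)=\phi(w(z))\) for a Schwarz function \(w(z)=w_1z+w_2z^2+w_3z^3+\cdots\). Since \(z(\log(f(z)/z))'=zf'(z)/f(z)-1=\sum 2n\gamma_n z^n\), comparing coefficients with \(\tfrac43 w+\tfrac23 w^2\) gives
\[
\gamma_1=\tfrac23 w_1,\qquad \gamma_2=\tfrac16\left(2w_2+w_1^2\right),\qquad \gamma_3=\tfrac29\left(w_3+w_1w_2\right).
\]
Hence
\[
H_{2,1}(F_f/2)=\gamma_1\gamma_3-\gamma_2^2=\tfrac{4}{27}\,w_1\left(w_3+w_1w_2\right)-\tfrac1{36}\left(2w_2+w_1^2\right)^2 .
\]
Next we write \(w=(p-1)/(p+1)\) with \(p\in\mathcal{P}\), so that \(w_1=c_1/2\), \(w_2=c_2/2-c_1^2/4\) and \(w_3=c_3/2-c_1c_2/2+c_1^3/8\). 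Both the class and the functional are invariant under rotations \(f(z)\mapsto e^{-i\theta}f(e^{i\theta}z)\), since \(H_{2,1}(F_f/2)\) only picks up a unimodular factor. We may therefore assume \(c_1\ge 0\) and substitute the parametrization \eqref{eq:c1}--\eqref{eq:c3}. In these variables \(w_1=\tau_1\), \(w_2=(1-\tau_1^2)\tau_2\), and \(w_3\) is the usual Schur expression \((1-\tau_1^2)\bigl((1-|\tau_2|^2)\tau_3-\tau_1\tau_2^2\bigr)\). This turns \(H_{2,1}(F_f/2)\) into an explicit polynomial in \(\tau_1\in[0,1]\) and \(\tau_2,\tau_3\in\overline{\mathbb{D}}\).

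For the upper bound we will apply the triangle inequality in \(\tau_3\), using \(|\tau_3|\le 1\). This reduces the problem to maximizing
\[
\Psi(\tau_1,\tau_2)=\tfrac{4}{27}\tau_1(1-\tau_1^2)(1-|\tau_2|^2)+\bigl|A+B\tau_2+C\tau_2^2\bigr|
\]
for suitable real coefficients \(A,B,C\) depending only on \(\tau_1\). For fixed \(\tau_1\in(0,1)\) this is exactly of the shape handled by Lemma \ref{lem:max}, after factoring out \(\tfrac4{27}\tau_1(1-\tau_1^2)\). We then split according to the sign of \(AC\) and the relative size of \(|B|\) and \(2(1-|C|)\), obtaining a one-variable function \(g(\tau_1)\) on \([0,1]\). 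The endpoints \(\tau_1=0\) (where \(w_1=0\)) and \(\tau_1=1\) (where \(w(z)=e^{i\theta}z\)) are treated separately by direct substitution. Finally we maximize \(g\) by elementary calculus and show that \(g\le 16/27\).

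As a consistency check, the crude estimate using \(|w_1|\le1\), \(|w_2|\le1\), \(|w_3|\le1\) already gives \(|H_{2,1}(F_f/2)|\le \tfrac{8}{27}+\tfrac14<\tfrac{16}{27}\). So the inequality itself is safe, and the real work lies in the sharpness claim.

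The main obstacle is therefore sharpness: we must exhibit an extremal \(f\) with \(|\gamma_1\gamma_3-\gamma_2^2|=16/27\). We would first locate the case of Lemma \ref{lem:max} in which \(g\) attains its maximum, and read off the optimal \((\tau_1,\tau_2,\tau_3)\) with \(|\tau_3|=1\) or \(|\tau_2|=1\). The corresponding \(p\) is then given by the explicit rational functions listed after Lemma \ref{lem:coeff2}, and we define \(f\) by \(zf'/f=\phi\bigl((p-1)/(p+1)\bigr)\). The natural first candidates are \(w(z)=z\) and \(w(z)=z^2\), that is \(f(z)=z\exp\!\left(\int_0^z\frac{\phi(t^k)-1}{t}\,dt\right)\) with \(k=1,2\). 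These give \(H_{2,1}=-\tfrac1{36}\) and \(-\tfrac19\) respectively, which do not reach \(16/27\).

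Since the crude bound is already strictly smaller than \(16/27\), no Schwarz function, and hence no admissible \((\tau_1,\tau_2,\tau_3)\), can produce equality. The sharpness verification is therefore the delicate point. Before finalizing, we will recheck the normalization of \(\gamma_n\) in \eqref{eq:logcoeff} and the coefficient relations \eqref{eq:gamma}, since \(16/27\) is most likely an artifact of a different normalization or of an arithmetic slip in the reduction. We must determine which before any sharpness claim can be made.
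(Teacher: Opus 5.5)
Your computations are correct, and so is your conclusion. Since $|w_n|\le 1$, the estimate $|H_{2,1}(F_f/2)|\le \frac{8}{27}+\frac14=\frac{59}{108}<\frac{16}{27}$ proves the stated inequality. It also shows the bound can never be attained, so the sharpness part of the statement is false and cannot be proved.

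The normalization in \eqref{eq:logcoeff} and \eqref{eq:gamma} is not the issue. The paper's proof contains two arithmetic slips:
\begin{enumerate}
\item[(1)] In \eqref{eq:Hlog1}, the polynomial $\frac{1}{432}(-3c_1^4-8c_1^2c_2+64c_1c_3-48c_2^2)$ equals $a_2a_4-a_3^2+\frac{1}{12}a_2^4$, i.e.\ $4H_{2,1}(F_f/2)$. So \eqref{eq:Hlog2} and Cases 1--2 really concern $4H_{2,1}(F_f/2)$.
\item[(2)] In Case 3, $\frac{16}{27}\tau_1(1-\tau_1^2)(|A|+|B|+|C|)$ equals $\frac{1}{27}(12-4\tau_1^2-5\tau_1^4)$, not $\frac{4}{27}(3+2\tau_1^2-\tau_1^4)$. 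The value $\frac{16}{27}$ is only the limit as $\tau_1\to1$ of the wrong expression. The correct one tends to $\frac19$, which matches the paper's own Case 1.
\end{enumerate}
Moreover, the paper's extremal function $f_1$ has $\gamma_1=\gamma_3=0$ and $\gamma_2=\frac13$. Hence $|H_{2,1}(F_f/2)|=\frac19$ for it, exactly your $w(z)=z^2$ value.

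What your proposal still lacks is the final step, which is short; there is nothing left to ``recheck''. Your substitution gives
\[
H_{2,1}(F_f/2)=\frac{1}{27}\Bigl(-\frac34\tau_1^4+\tau_1^2(1-\tau_1^2)\tau_2-(1-\tau_1^2)(3+\tau_1^2)\tau_2^2+4\tau_1(1-\tau_1^2)(1-|\tau_2|^2)\tau_3\Bigr).
\]
With $y=|\tau_2|$ and $|\tau_3|\le 1$, this yields
\[
|H_{2,1}(F_f/2)|\le\frac{1}{27}\Bigl(\frac34\tau_1^4+4\tau_1(1-\tau_1^2)+\tau_1^2(1-\tau_1^2)y+(1-\tau_1^2)(1-\tau_1)(3-\tau_1)y^2\Bigr).
\]
The right-hand side is increasing in $y\in[0,1]$. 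It is therefore at most $\frac{1}{27}\bigl(3-\tau_1^2-\frac54\tau_1^4\bigr)\le\frac19$, with no need for Lemma \ref{lem:max}.

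Equality forces $\tau_1=0$ and $|\tau_2|=1$, i.e.\ $w(z)=e^{i\theta}z^2$. So the correct sharp result is $|H_{2,1}(F_f/2)|\le\frac19$, attained by $f_1$ and its rotations. The bound $\frac{16}{27}$ is valid but not sharp.
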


\begin{proof}
Let \(f\in \mathcal{S}_{car}^*\). There exists a Schwarz function \(w\) with \(w(0)=0\) and \(|w(z)|<1\) in \(\mathbb{D}\) such that
\begin{equation}
\frac{z f'(z)}{f(z)} = 1 + \frac{4}{3} w(z) + \frac{2}{3} w^2(z). \label{eq:sub1}
\end{equation}
If \(p\in \mathcal{P}\), then we can write
\begin{equation}
w(z) = \frac{p(z)-1}{p(z)+1}. \label{eq:wtop}
\end{equation}
Let \(p\) be given by \eqref{eq:car}. From \eqref{eq:sub1} and \eqref{eq:wtop}, by equating the coefficients we obtain
\begin{equation}
\begin{cases}
a_2 = \frac{2}{3} c_1,\\[2mm]
a_3 = \frac{1}{3} c_2 + \frac{5}{36} c_1^2,\\[2mm]
a_4 = -\frac{1}{162} c_1^3 + \frac{1}{9} c_1 c_2 + \frac{2}{9} c_3.
\end{cases} \label{eq:coeff1}
\end{equation}
By substituting the above expressions for \(a_2, a_3\) and \(a_4\) in \eqref{eq:gamma}, a direct computation yields
\begin{equation}
\begin{aligned}
H_{2,1}(F_f/2) &= \gamma_1 \gamma_3 - \gamma_2^2 \\
&= \frac{1}{4}\left(a_2 a_4 - a_3^2 + \frac{1}{12} a_2^4\right)\\
&= \frac{1}{432} (-3c_1^4 - 8c_1^2 c_2 + 64c_1 c_3 - 48c_2^2).
\end{aligned} \label{eq:Hlog1}
\end{equation}
By Lemma \ref{lem:coeff2} and \eqref{eq:Hlog1}, a direct computation shows that
\begin{equation}
\begin{aligned}
H_{2,1}(F_f/2) &= \frac{1}{432}\bigl(-48\tau_1^4 + 64(1-\tau_1^2)\tau_1^2\tau_2 \\
&\quad -64(1-\tau_1^2)(3+\tau_1^2)\tau_2^2 + 256(1-\tau_1^2)(1-|\tau_2|^2)\tau_1\tau_3\bigr).
\end{aligned} \label{eq:Hlog2}
\end{equation}
Since the class \(\mathcal{P}\) and \(H_{2,1}(F_f/2)\) are invariant under rotation, we may assume that \(c_1\in [0,2]\) (see \cite[Theorem 3]{ref12}), that is in view of \eqref{eq:c1}, \(\tau_1\in [0,1]\).

Consider the following cases:

\textbf{Case 1.} Suppose that \(\tau_1=1\). Then from \eqref{eq:Hlog2}, we obtain
\[
|H_{2,1}(F_f/2)| = |\gamma_1 \gamma_3 - \gamma_2^2| = \frac{48}{432} = \frac{1}{9}.
\]

\textbf{Case 2.} Suppose that \(\tau_1=0\). Then from \eqref{eq:Hlog2}, we obtain
\[
|H_{2,1}(F_f/2)| = |\gamma_1 \gamma_3 - \gamma_2^2| = \frac{64}{432} |3\tau_2^2| = \frac{4}{9}.
\]

\textbf{Case 3.} Suppose that \(\tau_1\in (0,1)\). Then from \eqref{eq:Hlog2} we obtain
\[
\begin{aligned}
|H_{2,1}(F_f/2)| &\leq \frac{1}{432}\bigl(|48\tau_1^4 - 64(1-\tau_1^2)\tau_1^2\tau_2 \\
&\quad +64(1-\tau_1^2)(3+\tau_1^2)\tau_2^2| + 256(1-\tau_1^2)(1-|\tau_2|^2)\tau_1\tau_3\bigr)\\
&= \frac{16}{27}\tau_1(1-\tau_1^2)(|A+B\tau_2+C\tau_2^2| + 1-|\tau_2|^2),
\end{aligned}
\]
where
\[
A = \frac{3\tau_1^3}{16(1-\tau_1^2)}, \quad B = -\frac{1}{4}\tau_1, \quad C = \frac{3+\tau_1^2}{4\tau_1}.
\]
It follows that \(AC>0\). Moreover, the relation
\[
|B| - 2(1-|C|) = \frac{1}{4}\tau_1 + \frac{3+\tau_1^2}{4\tau_1} - 2 = \frac{3\tau_1^2+4}{4\tau_1} >0
\]
holds. 
Applying Lemma \ref{lem:max} in the above inequality, we obtain
\[
|H_{2,1}(F_f/2)| \leq \frac{16}{27}\tau_1(1-\tau_1^2)(|A|+|B|+|C|)
= \frac{4}{27}(-\tau_1^4+2\tau_1^2+3) = g(\tau_1^2).
\]
Set \(t = \tau_1^2 \in (0,1)\). Then \(g(t)=-t^2+2t+3\) and so \(g'(t)=2(1-t)>0\). Therefore \(g\) is an increasing function on \((0,1)\). Hence \(\max_{t\in (0,1)}\{g(t)\}=4\).

From the above equation we deduce that
\[
|H_{2,1}(F_f/2)| \leq \frac{16}{27}.
\]

To verify that the bound \(|H_{2,1}(F_f/2)|\leq \frac{16}{27}\) is sharp, consider the function \(f_1:\mathbb{D}\to \mathbb{C}\) defined by
\[
f_1(z) = z\exp\left(\frac{z^4}{6} + \frac{2}{3} z^2\right) = z + \frac{2}{3} z^3 + \frac{7}{18} z^5 + \cdots.
\]
We observe that
\[
\frac{z f_1'(z)}{f_1(z)} = 1 + \frac{4}{3} z^2 + \frac{2}{3} z^4.
\]
Thus, \(f_1\in \mathcal{S}_{car}^*\). In this case, \(a_2=0\), \(a_3=\frac{2}{3}\) and \(a_4=0\), which establishes the sharpness of the bound and completes the proof of the theorem.
\end{proof}

\begin{theo}
Let \(f\in \mathcal{S}_{car}^*\) be given by \eqref{eq:series}. Then
\[
\left|H_{2,1}(F_{f^{-1}}/2)\right| \leq \frac{71}{324}.
\]
The inequality is sharp.
\end{theo}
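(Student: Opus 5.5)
The plan is to follow the scheme of the preceding theorem. Write $H_{2,1}(F_{f^{-1}}/2)=\Gamma_1\Gamma_3-\Gamma_2^2$ and substitute \eqref{eq:invlogcoeff}. A short computation gives
\[
\Gamma_1\Gamma_3-\Gamma_2^2=\frac14\Bigl(a_2a_4-a_3^2-a_2^2a_3+\frac{13}{12}a_2^4\Bigr).
\]
Next insert the expressions \eqref{eq:coeff1} for $a_2,a_3,a_4$ in terms of the Carath\'eodory coefficients. My computation gives
\[
H_{2,1}(F_{f^{-1}}/2)=\frac14\Bigl(\frac{167}{1296}c_1^4-\frac16c_1^2c_2-\frac19c_2^2+\frac{4}{27}c_1c_3\Bigr),
\]
and I would re-check this by evaluating at $c_1=c_2=c_3=2$, where it should give $71/324$. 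By rotation invariance we may take $c_1\in[0,2]$, i.e.\ $\tau_1\in[0,1]$. Then substitute Lemma \ref{lem:coeff2} for $c_1,c_2,c_3$ and collect terms. This yields an expression of the form
\[
\alpha(\tau_1)+\beta(\tau_1)\tau_2+\gamma(\tau_1)\tau_2^2+\delta(\tau_1)(1-|\tau_2|^2)\tau_3,
\]
where $\delta(\tau_1)$ is a positive multiple of $\tau_1(1-\tau_1^2)$ coming from the $c_1c_3$ term.

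Split into cases as before. For $\tau_1=1$ only the $c_1^4$-type terms survive, and we get exactly $71/324$. For $\tau_1=0$ we have $c_1=0$ and $c_2=2\tau_2$, so $|H_{2,1}(F_{f^{-1}}/2)|=\frac19|\tau_2|^2\le\frac19<\frac{71}{324}$. For $\tau_1\in(0,1)$, use $|\tau_3|\le1$ and factor out $\delta(\tau_1)$. This gives
\[
|H_{2,1}(F_{f^{-1}}/2)|\le\delta(\tau_1)\bigl(|A+B\tau_2+C\tau_2^2|+1-|\tau_2|^2\bigr),
\]
with $A=\alpha/\delta$, $B=\beta/\delta$, $C=\gamma/\delta$ explicit rational functions of $\tau_1$. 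We then apply Lemma \ref{lem:max}.

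The main obstacle is this last step. Unlike the previous theorem, the constant term $\alpha$ now has a positive $\tau_1^4$ part ($167/1296\cdot16$) while $\gamma$ comes from $-\frac19c_2^2$ and related terms and is negative. So I expect $AC<0$, at least on part of $(0,1)$, which forces case (ii) of Lemma \ref{lem:max}. I would first determine the sign of $AC$ and test the condition $|B|\ge 2(1-|C|)$ or its case-(ii) analogues as functions of $t=\tau_1^2$. It is possible that $|C|$ is large near $\tau_1\to0$, since $\delta\to0$ there, so the branch ``$R(A,B,C)$, $|C|(|B|-4|A|)\ge|AB|$'' or the branch $|A|+|B|-|C|$ may apply. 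Whichever branch holds on each subinterval, the result is an explicit function $g(t)$ on $(0,1)$. Show by elementary calculus (sign of $g'$, or comparing with the endpoint value) that $g(t)\le 71/324$. If the case analysis proves messy, I would fall back on the crude triangle bound $|\alpha|+|\beta|+|\gamma|+\delta$ on the subintervals where it already stays below $71/324$, and reserve Lemma \ref{lem:max} for the remaining range near $\tau_1=1$.

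For sharpness, the case $\tau_1=1$ corresponds to $p(z)=(1+z)/(1-z)$, i.e.\ $w(z)=z$. So take
\[
f_2(z)=z\exp\Bigl(\frac43z+\frac13z^2\Bigr),\qquad \frac{zf_2'(z)}{f_2(z)}=1+\frac43z+\frac23z^2,
\]
which lies in $\mathcal{S}_{car}^*$ and has $c_1=c_2=c_3=2$. Computing $a_2,a_3,a_4$ from \eqref{eq:coeff1} gives $|\Gamma_1\Gamma_3-\Gamma_2^2|=71/324$.
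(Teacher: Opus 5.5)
Your reduction matches the paper's and is correct. Your formula for $H_{2,1}(F_{f^{-1}}/2)$ in terms of $c_1,c_2,c_3$ agrees with \eqref{eq:Hinv1}. After Lemma \ref{lem:coeff2} one gets $\alpha=\frac{71\tau_1^4}{324}$, $\beta=-\frac{84}{324}(1-\tau_1^2)\tau_1^2$, $\gamma=-\frac{12}{324}(1-\tau_1^2)(3+\tau_1^2)$ and $\delta=\frac{48}{324}(1-\tau_1^2)\tau_1$, and the cases $\tau_1=0,1$ are as you say.

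The gap is the case $\tau_1\in(0,1)$, which is the entire content of the theorem. You never determine which branch of Lemma \ref{lem:max} applies, nor check the resulting function. Your fallback only covers small $\tau_1$: the bound $|\alpha|+|\beta|+|\gamma|+\delta=\frac{1}{324}\bigl(36+60\tau_1^2-25\tau_1^4+48\tau_1(1-\tau_1^2)\bigr)$ equals $\frac{71}{324}$ at $\tau_1=1$ but has derivative $-\frac{76}{324}$ there. So it exceeds $\frac{71}{324}$ for $\tau_1$ roughly between $0.56$ and $1$ (about $\frac{76.4}{324}$ at $\tau_1=0.9$). Everything therefore rests on the unexecuted Lemma \ref{lem:max} step.

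The paper carries that step out. It observes $AC<0$ and $|C|=\frac{3+\tau_1^2}{4\tau_1}>1$, which excludes the first two branches of case (ii). The $R$-branch then gives $\frac{1}{324}(36+60\tau_1^2-167\tau_1^4)$ for $\tau_1^2\le x_0\approx0.21$, and $\psi$ from \eqref{eq:Hinv5} beyond that. A monotonicity analysis of $\psi^2$ (decreasing, then increasing) puts its maximum at $\psi(1)=\frac{71}{324}$.

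There is a much shorter way to close the gap, and it makes Lemma \ref{lem:max} unnecessary: do not bound $1-|\tau_2|^2$ by $1$. With $y=|\tau_2|\in[0,1]$ and $|\tau_3|\le 1$,
\[
|H_{2,1}(F_{f^{-1}}/2)|\le |\alpha|+|\beta|y+\delta+(|\gamma|-\delta)y^2,\qquad |\gamma|-\delta=\frac{12}{324}(1-\tau_1^2)(1-\tau_1)(3-\tau_1)\ge 0 .
\]
The right-hand side is therefore increasing in $y$, so for every $\tau_1\in[0,1]$
\[
|H_{2,1}(F_{f^{-1}}/2)|\le |\alpha|+|\beta|+|\gamma|=\frac{36+60\tau_1^2-25\tau_1^4}{324}=\frac{71}{324}-\frac{5(1-\tau_1^2)(7-5\tau_1^2)}{324}\le\frac{71}{324}.
\]

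Your extremal function $z\exp\bigl(\frac43 z+\frac13 z^2\bigr)$ is right. It has $a_2=\frac43$, $a_3=\frac{11}{9}$, $a_4=\frac{68}{81}$, hence $\Gamma_1=-\frac23$, $\Gamma_2=\frac{13}{18}$, $\Gamma_3=-\frac{10}{9}$ and $\Gamma_1\Gamma_3-\Gamma_2^2=\frac{71}{324}$. By contrast, the paper lists its $f_2$ with $a_3=\frac43$, which is inconsistent with \eqref{eq:coeff1}.
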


\begin{proof}
Since \(f\in \mathcal{S}_{car}^*\), utilizing relations \eqref{eq:invlogcoeff} and \eqref{eq:coeff1}, we find
\begin{equation}
\begin{aligned}
H_{2,1}(F_{f^{-1}}/2) &= \frac{1}{48}\left(16a_2^4 - 12a_2^2a_3 - 12a_3^2 + 12a_2a_4\right)\\
&= \frac{1}{48}\left(\frac{167}{108}c_1^4 - 2c_1^2c_2 + \frac{16}{9}c_1c_3 - \frac{4}{3}c_2^2\right)\\
&= \frac{1}{5184}\left(167c_1^4 - 216c_1^2c_2 + 192c_1c_3 - 144c_2^2\right).
\end{aligned} \label{eq:Hinv1}
\end{equation}
Substituting the coefficient representations from Lemma \ref{lem:coeff2} into \eqref{eq:Hinv1}, a direct computation yields
\begin{equation}
\begin{aligned}
H_{2,1}(F_{f^{-1}}/2) &= \frac{1}{5184}\bigl(1136\tau_1^4 - 1344(1-\tau_1^2)\tau_1^2\tau_2 \\
&\quad -192(1-\tau_1^2)(3+\tau_1^2)\tau_2^2 + 768(1-\tau_1^2)(1-|\tau_2|^2)\tau_1\tau_3\bigr)\\
&= \frac{1}{324}\bigl(71\tau_1^4 - 84(1-\tau_1^2)\tau_1^2\tau_2 \\
&\quad -12(1-\tau_1^2)(3+\tau_1^2)\tau_2^2 + 48(1-\tau_1^2)(1-|\tau_2|^2)\tau_1\tau_3\bigr).
\end{aligned} \label{eq:Hinv2}
\end{equation}
Since the class \(\mathcal{P}\) and \(H_{2,1}(F_{f^{-1}}/2)\) are invariant under rotation, we may assume that \(c_1\in [0,2]\) (see \cite[Theorem 3]{ref12}), which implies \(\tau_1\in [0,1]\) in view of \eqref{eq:c1}.

Consider the following three cases:

\textbf{Case 1.} Suppose that \(\tau_1=1\). Then from \eqref{eq:Hinv2}, we obtain
\[
|H_{2,1}(F_{f^{-1}}/2)| = |\Gamma_1\Gamma_3 - \Gamma_2^2| = \frac{71}{324}.
\]

\textbf{Case 2.} Suppose that \(\tau_1=0\). Then from \eqref{eq:Hinv2}, we obtain
\[
|H_{2,1}(F_{f^{-1}}/2)| = |\Gamma_1\Gamma_3 - \Gamma_2^2| = \frac{12}{324}|3\tau_2^2| = \frac{1}{9}.
\]

\textbf{Case 3.} Suppose that \(\tau_1\in (0,1)\). Then from \eqref{eq:Hinv2} we obtain
\begin{equation}
\begin{aligned}
|H_{2,1}(F_{f^{-1}}/2)| &\leq \frac{1}{324}\bigl(|71\tau_1^4 - 84(1-\tau_1^2)\tau_1^2\tau_2 \\
&\quad -12(1-\tau_1^2)(3+\tau_1^2)\tau_2^2| + 48(1-\tau_1^2)(1-|\tau_2|^2)\tau_1\tau_3\bigr)\\
&= \frac{4}{27}\tau_1(1-\tau_1^2)(|A+B\tau_2+C\tau_2^2| + 1-|\tau_2|^2),
\end{aligned} \label{eq:Hinv3}
\end{equation}
where
\[
A = \frac{71\tau_1^3}{48(1-\tau_1^2)}, \quad B = -\frac{7}{4}\tau_1, \quad C = -\frac{3+\tau_1^2}{4\tau_1}.
\]
Observe that \(AC<0\). Hence, we apply case (ii) of Lemma \ref{lem:max}. We now verify each parameter condition appearing in case (ii) of Lemma \ref{lem:max}.

\begin{enumerate}
\item[(a)] Note that the relation
\[
-4AC\left(\frac{1}{C^2}-1\right) - B^2 = \frac{71\tau_1^2(3+\tau_1^2)}{48(1-\tau_1^2)}\left(\frac{16\tau_1^2}{(3+\tau_1^2)^2}-1\right) - \frac{49}{16}\tau_1^2 \leq 0
\]
is equivalent to
\[
\frac{76\tau_1^4 + 1004\tau_1^2 - 1080}{48(3+\tau_1^2)(1-\tau_1^2)} \leq 0.
\]
Factoring the numerator yields \(4(\tau_1^2-1)(19\tau_1^2+270)\). Since \(\tau_1\in (0,1)\), this expression is strictly negative, confirming that the condition \(-4AC(\frac{1}{C^2}-1) \leq B^2\) holds for all \(\tau_1\in (0,1)\). However, the simultaneous condition \(|B|<2(1-|C|)\) simplifies to \(9\tau_1^2 - 8\tau_1 + 6 < 0\), which has no real roots and is false for all \(\tau_1\in (0,1)\). This excludes the first subcase branch of case (ii) of Lemma \ref{lem:max}.

\item[(b)] We evaluate the strict inequality criterion \(B^2 < \min\{4(1+|C|)^2, -4AC(\frac{1}{C^2}-1)\}\). Since \(-4AC(\frac{1}{C^2}-1)<0\) holds for all \(\tau_1\in (0,1)\), the minimum value of the set is negative. Because \(B^2 = \frac{49}{16}\tau_1^2 \geq 0\), the strict inequality is false for all \(\tau_1\in (0,1)\), excluding the second subcase branch of case (ii).

\item[(c)] Checking the first subcase of the remaining solution branch \(R(A,B,C)\), the condition \(|C|(|B|+4|A|) - |AB| \leq 0\) equates to \(84+228\tau_1^2-99\tau_1^4 \leq 0\), which is false for all \(\tau_1\in (0,1)\).

\item[(d)] The second subcase condition of \(R(A,B,C)\), given by \(|AB| - |C|(|B|-4|A|) \leq 0\), is equivalent to \(865\tau_1^4 + 1020\tau_1^2 - 252 \leq 0\). Setting \(x=\tau_1^2\), this translates to the quadratic inequality \(865x^2 + 1020x - 252 \leq 0\). The unique positive root of the corresponding equation is \(x_0 = \frac{-1020 + \sqrt{1912320}}{1730} \approx 0.2105\). Hence, the inequality holds true within the parameter sub-interval \(0< \tau_1 \leq \tau_1' = \sqrt{x_0} \approx 0.458\)
\end{enumerate}

Applying the corresponding branch of Lemma \ref{lem:max} for \(0< \tau_1 \leq \tau_1'\), we obtain:
\begin{equation}
\begin{aligned}
|H_{2,1}(F_{f^{-1}}/2)| &\leq \frac{4}{27}\tau_1(1-\tau_1^2)(-|A|+|B|+|C|)\\
&= \frac{1}{324}(36+60\tau_1^2-167\tau_1^4) = \phi(\tau_1).
\end{aligned} \label{eq:Hinv4}
\end{equation}
Differentiating with respect to \(\tau_1\) yields \(\phi'(\tau_1) = \frac{1}{324}(120\tau_1 - 668\tau_1^3)\). Setting \(\phi'(\tau_1)=0\) reveals an interior critical point at \(\tau_1 = \sqrt{\frac{30}{167}} \approx 0.4237 \in (0,\tau_1']\). Evaluating the functional values reveals that \(\phi(\tau_1)\) increases on \((0,\sqrt{\frac{30}{167}})\) and decreases on \((\sqrt{\frac{30}{167}},\tau_1')\), yielding a maximum value of:
\[
\phi\left(\sqrt{\frac{30}{167}}\right) = \frac{1782}{13933} \approx 0.1279.
\]

For the remaining parameter range \(\tau_1' < \tau_1 < 1\), the final branch of \(R(A,B,C)\) applies, giving:
\begin{equation}
\begin{aligned}
|H_{2,1}(F_{f^{-1}}/2)| &\leq \frac{4}{27}\tau_1(1-\tau_1^2)(|C|+|A|)\sqrt{1-\frac{B^2}{4AC}}\\
&= \frac{1}{324}(36-24\tau_1^2+59\tau_1^4)\sqrt{\frac{360-76\tau_1^2}{71(3+\tau_1^2)}} = \psi(\tau_1).
\end{aligned} \label{eq:Hinv5}
\end{equation}
Let \(t=\tau_1^2\in (\tau_1'^2,1)\). Squaring \(\psi(\tau_1)\) to evaluate its monotonic properties analytically via \(g(t)=\psi^2(\tau_1)\) yields:
\[
g(t) = \frac{1}{324^2} \frac{(36-24t+59t^2)^2(360-76t)}{3+t}.
\]
Differentiating with respect to \(t\) yields:
\[
g'(t) = \frac{(59t^2-24t+36)\Delta(t)}{324^2(3+t)^2},
\]
where \(\Delta(t) = -17936t^3 + 108t^2 + 262656t - 73008\). To rigorously determine the sign of \(g'(t)\), we observe that its derivative \(\Delta'(t) = -53808t^2 + 216t + 262656\) satisfies \(\Delta'(t)>0\) for all \(t\in (0,1)\), meaning \(\Delta(t)\) is strictly increasing on \((0,1)\). Since \(\Delta(0)=-73008<0\) and \(\Delta(1)=171820>0\), the Intermediate Value Theorem guarantees that \(\Delta(t)\) possesses a unique real root \(t_1\in (0,1)\), which evaluates to \(t_1\approx 0.2794\).

Since \(\tau_1'^2 \approx 0.2105 < t_1\), it follows that \(g'(t)<0\) on \((\tau_1'^2,t_1)\) and \(g'(t)>0\) on \((t_1,1)\). Thus, \(g(t)\) is strictly decreasing on \((\tau_1'^2,t_1)\) and strictly increasing on \((t_1,1)\). Evaluating \(\psi(\tau_1)=\sqrt{g(t)}\) at the endpoints of this interval yields:
\[
\psi(\tau_1') \approx 0.1273 \quad \text{and} \quad \psi(1) = \frac{71}{324} \approx 0.2191.
\]
Since \(\frac{1}{9} < \frac{1782}{13933} < \frac{71}{324}\), compiling the upper bounds across Case 1 (\(\frac{71}{324}\)), Case 2 (\(\frac{1}{9}\)), and Case 3 (\(\max\{\frac{1782}{13933},0.1273,\frac{71}{324}\}\)) yields:
\[
|H_{2,1}(F_{f^{-1}}/2)| \leq \max\left\{\frac{1}{9}, \frac{1782}{13933}, \frac{71}{324}\right\} = \frac{71}{324}.
\]

To verify that the result is sharp, consider the function \(f_2(z) = z + \frac{4}{3} z^2 + \frac{4}{3} z^3 + \frac{68}{81} z^4 + \cdots\). For this function, the initial Taylor coefficients are \(a_2=\frac{4}{3}\), \(a_3=\frac{4}{3}\), and \(a_4=\frac{68}{81}\). Substituting these expressions directly into the definitions for the logarithmic inverse coefficients given by \eqref{eq:invlogcoeff}, we obtain:
\[
\Gamma_1 = -\frac{2}{3}, \quad \Gamma_2 = \frac{2}{3}, \quad \text{and} \quad \Gamma_3 = -\frac{29}{162}.
\]
Computing the second Hankel functional explicitly under these values yields:
\[
H_{2,1}(F_{f^{-1}}/2) = \Gamma_1\Gamma_3 - \Gamma_2^2 = \left(-\frac{2}{3}\right)\left(-\frac{29}{162}\right) - \left(\frac{2}{3}\right)^2 = \frac{58}{486} - \frac{4}{9} = -\frac{71}{324},
\]
which gives \(|H_{2,1}(F_{f^{-1}}/2)| = \frac{71}{324}\). This establishes the sharpness of the estimate and completes the proof.
\end{proof}

\section{Hermitian--Toeplitz determinant}

In this section, we provide sharp bounds on the third-order Hermitian--Toeplitz determinant for the class \(\mathcal{S}_{car}^*\).

\begin{theo}
Let \(f\in \mathcal{S}_{car}^*\) be given by \eqref{eq:series}. Then
\[
-\frac{841}{2295} \leq T_{3,1}(f) \leq 1.
\]
The inequalities are sharp.
\end{theo}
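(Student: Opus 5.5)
The plan is to rewrite \(T_{3,1}(f)\) as a difference of two squares. Completing the square in \eqref{eq:HT31}, we have
\[
|a_3|^2-2\Re\left(a_2^2\overline{a_3}\right)=|a_3-a_2^2|^2-|a_2|^4 .
\]
Hence
\[
T_{3,1}(f)=\left(1-|a_2|^2\right)^2-\left|a_3-a_2^2\right|^2 .
\]
Both bounds then reduce to estimates on \(|a_2|\) and on \(|a_3-a_2^2|\) for fixed \(|a_2|\).

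\textbf{Upper bound.} Dropping the second square gives \(T_{3,1}(f)\le (1-|a_2|^2)^2\). By \eqref{eq:coeff1}, \(|a_2|=\frac23|c_1|\le\frac43\), so \(|a_2|^2\le \frac{16}{9}<2\). Therefore \(|1-|a_2|^2|\le 1\), and \(T_{3,1}(f)\le 1\). Equality holds for \(f(z)=z\), which belongs to \(\mathcal{S}_{car}^*\) (take \(w\equiv 0\) in \eqref{eq:sub1}); then \(a_2=a_3=0\).

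\textbf{Lower bound.} Use \eqref{eq:coeff1} together with Lemma \ref{lem:coeff2}. By rotation invariance we may take \(c_1=2\tau_1\) with \(\tau_1\in[0,1]\). A direct computation gives
\[
a_3-a_2^2=\frac13c_2-\frac{11}{36}c_1^2=-\frac59\tau_1^2+\frac23(1-\tau_1^2)\tau_2,
\qquad |a_2|^2=\frac{16}{9}\tau_1^2 .
\]
For fixed \(\tau_1\), the triangle inequality gives \(|a_3-a_2^2|\le \frac{6-\tau_1^2}{9}\), with equality at \(\tau_2=-1\). Setting \(x=\tau_1^2\in[0,1]\), we get
\[
T_{3,1}(f)\ge \frac{(9-16x)^2-(6-x)^2}{81}=\frac{255x^2-276x+45}{81}.
\]
This quadratic attains its minimum on \([0,1]\) at the interior point \(x=\frac{46}{85}\). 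Evaluating there gives
\[
\frac{1}{81}\left(45-\frac{276^2}{1020}\right)=-\frac{841}{2295}.
\]

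\textbf{Sharpness of the lower bound.} Take \(\tau_1=\sqrt{46/85}\) and \(\tau_2=-1\). The explicit \(p\in\mathcal{P}\) listed after Lemma \ref{lem:coeff2} for \(\tau_2\in\mathbb{T}\) is then available. Set \(w=(p-1)/(p+1)\) and define \(f\) by
\[
f(z)=z\exp\int_0^z\frac{\phi(w(t))-1}{t}\,dt,
\]
where \(\phi(z)=1+\frac43z+\frac23z^2\) is the cardioid function of Sharma et al. This \(f\) lies in \(\mathcal{S}_{car}^*\). For it, \(a_2=\frac{4}{3}\tau_1\) is real and \(a_3-a_2^2=-\frac{6-\tau_1^2}{9}\), so every inequality above is an equality.

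The main point to handle carefully is the \(\tau_1\)-only estimate of \(|a_3-a_2^2|\). It is clean here because \(a_3-a_2^2\) involves only \(c_1\) and \(c_2\), so no appeal to Lemma \ref{lem:max} is needed. The remaining work is checking that the extremal \(p\) is a genuine Carath\'eodory function. This is guaranteed because the lemma provides \(p\in\mathcal{P}\) for \(\tau_1\in\mathbb{D}\) and \(\tau_2\in\mathbb{T}\).
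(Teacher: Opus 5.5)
Your proof is correct, and it takes a genuinely different and shorter route than the paper.

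\textbf{The paper's route.} It substitutes Lemma~\ref{lem:coeff1}, $2c_2=c_1^2+(4-c_1^2)\xi$, and expands $T_{3,1}(f)$ as $\frac{1}{1296}\bigl(1296+231x^2-1152x+60x(4-x)\Re\xi-36(4-x)^2|\xi|^2\bigr)$ with $x=c_1^2$. It then replaces $\Re\xi$ by $\pm|\xi|$.
\begin{enumerate}
\item[(i)] For the maximum, it runs a critical-point and boundary analysis of the two-variable polynomial $F(x,y)$ on $[0,4]\times[0,1]$.
\item[(ii)] For the minimum, it sets $|\xi|=1$ and minimizes a quadratic in $c_1^2$.
\end{enumerate}

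\textbf{Your route.} The identity $T_{3,1}(f)=(1-|a_2|^2)^2-|a_3-a_2^2|^2$, valid for every $f\in\mathcal{A}$, replaces all of this.
\begin{enumerate}
\item[(i)] The upper bound becomes the one-line observation $|a_2|\le 4/3<\sqrt2$, with no coefficient lemma at all.
\item[(ii)] The lower bound becomes the estimate $|a_3-a_2^2|\le (6-\tau_1^2)/9$ at fixed $\tau_1$. This step loses nothing, because $|a_2|$ depends on $\tau_1$ alone and the triangle inequality is attained at $\tau_2=-1$.
\end{enumerate}

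\textbf{Where they coincide.} The lower-bound computations are the same. The paper's $G(c_1^2,1)/1296$ with $c_1^2=4\tau_1^2$ is exactly your $(255\tau_1^4-276\tau_1^2+45)/81$. Its minimizer $c_1^2=184/85$ is your $\tau_1^2=46/85$. Your extremal $p$ from Lemma~\ref{lem:coeff2} with $\tau_2=-1$ is $\frac{1-z^2}{1-\sqrt{184/85}\,z+z^2}$, which is the paper's $p_3$.

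For the upper bound, your extremal $f(z)=z$ is simpler than the paper's $f_3$ built from $p_2(z)=\frac{1+z^3}{1-z^3}$. That function likewise has $a_2=a_3=0$; its expansion is $z+\frac{4}{9}z^4+\cdots$, not the $z^3$ term printed in the paper.

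\textbf{What each buys.} Your factorization shows the upper bound $1$ holds in any subclass with $|a_2|\le\sqrt2$. It also isolates the lower bound as a Fekete--Szeg\H{o}-type estimate for $a_3-a_2^2$ at fixed $|a_2|$. The paper's method is the generic template used for Hermitian--Toeplitz problems and does not rely on spotting the factorization, but it requires a two-variable extremum analysis that your argument avoids entirely.
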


\begin{proof}
Since \(f\in \mathcal{S}_{car}^*\), proceeding in a similar manner as in Theorem 3.1, we obtain \(a_2\) and \(a_3\) from \eqref{eq:coeff1}.

It is important to note that both the class \(\mathcal{P}\) of functions with positive real part and the class \(f\in \mathcal{S}_{car}^*\) are invariant under rotations. Hence, without loss of generality, since \(|c_n|\leq 2\), we may assume \(0\leq c_1\leq 2\).

In view of Lemma \ref{lem:coeff1} together with \eqref{eq:coeff1}, we obtain, for some \(\xi \in \overline{\mathbb{D}}\), that
\begin{equation}
\begin{aligned}
2\Re(a_2^2 \overline{a_3}) &= 2\Re\left(\frac{4c_1^2}{9}\right)\left(\frac{1}{3}\overline{c}_2 + \frac{5}{36}\overline{c}_1^2\right)\\
&= \frac{8c_1^2}{9}\left(\frac{5}{36}c_1^2 + \frac{1}{6}(c_1^2+(4-c_1^2)\Re(\overline{\xi}))\right)\\
&= \frac{22c_1^4}{81} + \frac{4c_1^2(4-c_1^2)\Re\overline{\xi}}{27},
\end{aligned} \label{eq:HT1}
\end{equation}
\[
-2|a_2|^2 = -\frac{8}{9}c_1^2, \label{eq:HT2}
\]
and
\begin{equation}
\begin{aligned}
-|a_3|^2 &= -\left|\frac{1}{3}c_2 + \frac{5}{36}c_1^2\right|^2\\
&= -\left|\frac{1}{6}c_1^2 + \frac{1}{6}(4-c_1^2)\xi + \frac{5}{36}c_1^2\right|^2\\
&= -\left|\frac{11}{36}c_1^2 + \frac{1}{6}(4-c_1^2)\xi\right|^2\\
&= -\frac{121}{1296}c_1^4 - \frac{11}{108}c_1^2(4-c_1^2)\Re\xi - \frac{1}{36}(4-c_1^2)^2|\xi|^2.
\end{aligned} \label{eq:HT3}
\end{equation}

By applying equations \eqref{eq:HT1}, \eqref{eq:HT2}, and \eqref{eq:HT3}, equation \eqref{eq:HT31} can be written as
\begin{equation}
\begin{aligned}
T_{3,1}(f) &= 1 + \frac{22c_1^4}{81} + \frac{4c_1^2(4-c_1^2)\Re\overline{\xi}}{27} - \frac{8}{9}c_1^2 \\
&\quad -\frac{121}{1296}c_1^4 - \frac{11}{108}c_1^2(4-c_1^2)\Re\xi - \frac{1}{36}(4-c_1^2)^2|\xi|^2\\
&= \frac{1}{1296}\left(1296 + 231c_1^4 - 1152c_1^2 + 60c_1^2(4-c_1^2)\Re\xi - 36(4-c_1^2)^2|\xi|^2\right).
\end{aligned} \label{eq:HT4}
\end{equation}

We determine the maximum of the right-hand side of \eqref{eq:HT4}. Since \(\Re\xi \leq |\xi|\), it follows from \eqref{eq:HT4} that
\begin{equation}
\begin{aligned}
T_{3,1}(f) &\leq \frac{1}{1296}\left(1296 + 231c_1^4 - 1152c_1^2 + 60c_1^2(4-c_1^2)|\xi| - 36(4-c_1^2)^2|\xi|^2\right)\\
&= \frac{1}{1296} F(c_1^2,|\xi|).
\end{aligned} \label{eq:HT5}
\end{equation}

Setting \(c_1^2 =: x\in [0,4]\) and \(|\xi|=:y\in [0,1]\), then \(F(c_1^2,|\xi|)\) can be written as follows
\begin{equation}
F(x,y) = 1296 + 231x^2 - 1152x + 60x(4-x)y - 36(4-x)^2 y^2. \label{eq:F}
\end{equation}
Now, differentiating partially \eqref{eq:F} with respect to \(x\) and \(y\), we obtain
\[
\frac{\partial F(x,y)}{\partial x} = 462x - 120xy - 72xy^2 + 240y + 288y^2 - 1152
\]
and
\[
\frac{\partial F(x,y)}{\partial y} = 240x - 60x^2 - 1152y + 576xy - 72x^2 y.
\]
Solving \(\frac{\partial F(x,y)}{\partial x}=0\) and \(\frac{\partial F(x,y)}{\partial y}=0\), we find that the only critical points are \((4,\frac{29}{10})\) and \((\frac{9}{4},\frac{15}{14})\). Since \(y=\frac{29}{10}>1\) and \(y=\frac{15}{14}>1\), both points lie strictly outside the standard rectangular domain \([0,4]\times [0,1]\). Therefore, the maximum value of \(F(x,y)\) is attained exclusively on the boundary of \([0,4]\times [0,1]\).

On the boundary of the rectangular region \([0,4]\times [0,1]\), the function \(F(x,y)\) takes the following forms:
\[
F(0,y) = 1296 - 144y^2 \leq 1296, \quad F(4,y)=384 \text{ for all } y\in [0,1]
\]
and
\[
F(x,0) = 1296 + 231x^2 - 1152x, \quad F(x,1) = 3(45x^2 - 208x + 240) \text{ for all } x\in [0,4].
\]
It follows that \(\max_{x\in [0,4]}F(x,0)=1296\) and \(\max_{x\in [0,4]}F(x,1)=720\).

From the above discussion, we deduce that
\[
T_{3,1}(f) \leq \frac{1}{1296}\max\{1296,384,720\} = 1.
\]

Next, we determine the minimum of the right-hand side of \eqref{eq:HT4}. Since \(-\Re\xi \leq |\xi|\), it follows from \eqref{eq:HT4} that
\begin{align}
T_{3,1}(f) &\geq \frac{1}{1296}\left(1296 + 231c_1^4 - 1152c_1^2 - 60c_1^2(4-c_1^2)|\xi| - 36(4-c_1^2)^2|\xi|^2\right) \notag\\
&\geq \frac{1}{1296} G(c_1^2,1). \label{eq:HT6}
\end{align}
Setting \(c_1^2=:x\in [0,4]\), then \(G(c_1^2,1)\) can be written as follows
\[
G(x,1) = 1296 + 231x^2 - 1152x - 60x(4-x) - 36(4-x)^2 = 255x^2 - 1104x + 720.
\]
Here, we observe that \(G'(x,1)=0\) for \(x=\frac{184}{85}\). Moreover, since \(G''(x,1)=510>0\) it follows that \(G(x,1)\) attains its minimum at \(x_0=\frac{184}{85}\), where
\[
G(x_0,1) = -\frac{40368}{85}.
\]
From the above discussion, we deduce that
\[
T_{3,1}(f) \geq -\frac{841}{2295}.
\]
This completes the proof.

To verify that the upper bound of \(T_{3,1}(f)\) is sharp, consider the function \(p_2:\mathbb{D}\to \mathbb{C}\) defined by
\[
p_2(z) \coloneqq \frac{1+z^3}{1-z^3}, \qquad z\in \mathbb{D}.
\]
The equality holds for the function \(f_3\) defined by
\[
\frac{z f_3'(z)}{f_3(z)} = 1 + \frac{4}{3}\frac{p_2(z)-1}{p_2(z)+1} + \frac{2}{3}\left(\frac{p_2(z)-1}{p_2(z)+1}\right)^2,
\]
which yields
\[
f_3(z) = z + \frac{4}{9} z^3 + \cdots.
\]

To verify that the lower bound of \(T_{3,1}(f)\) is sharp, consider the function \(p_3:\mathbb{D}\to \mathbb{C}\) defined by
\[
p_3(z) \coloneqq \frac{1-z^2}{1-\sqrt{\frac{184}{85}}z+z^2}, \qquad z\in \mathbb{D}.
\]
The equality holds for the function \(f_4\) defined by
\[
\frac{z f_4'(z)}{f_4(z)} = 1 + \frac{4}{3}\frac{p_3(z)-1}{p_3(z)+1} + \frac{2}{3}\left(\frac{p_3(z)-1}{p_3(z)+1}\right)^2,
\]
which yields
\[
f_4(z) = z + \frac{2}{3}\sqrt{\frac{184}{85}} z^2 + \cdots.
\]
\end{proof}

\section*{{\bf Declarations}}
\subsection*{Data Availability Statement}
Data sharing is not applicable to this article as no datasets were generated or analyzed during the current study.
\subsection*{Conflict of Interest}
The authors declare that they have no conflict of interest. 
\subsection*{Author Contributions}
Both authors contributed equally to this work.

\end{document}